\pgfplotsset{compat=1.14} 
\newtheorem{theorem}{Theorem}[section]
\newtheorem{proposition}[theorem]{Proposition}
\theoremstyle{definition}
\newtheorem{definition}[theorem]{Definition}
\theoremstyle{remark}
\newtheorem{remark}[theorem]{Remark}
\numberwithin{equation}{section}
\begin{document}

\title{An inequality of multiple integral of $s$ norm of vectors in $\mathbb{R}^n$}

\author{Theophilus Agama}
\address{Department of Mathematics, Universite Laval, Quebec, Canada
}
\email{thaga1@ulaval.ca}

\subjclass[2010]{Primary 26D10; Secondary 46C05}

\date{\today}


\keywords{local product; local product space; sheet}

\begin{abstract}
In this paper, we prove some new inequalities. To facilitate this proof, we introduce the notion of \emph{local product} on a \emph{sheet} and associated space.
\end{abstract}

\maketitle

\section{Introduction}

Inequalities are among the most effective instruments in analysis because they transform a qualitative structure into a quantitative control. In the setting of inner product spaces, this philosophy is most visibly embodied by the Cauchy--Schwarz inequality, which remains a foundational estimate for both finite-dimensional and infinite-dimensional analysis; standard treatments emphasize that many of the basic estimates of Hilbert space theory are ultimately organized around this principle \cite{rudin1987}. Over time, this classical point of view has been refined in several directions. One line of development seeks sharper inner-product inequalities such as Buzano-, Kre{\i}n-, and Cauchy--Schwarz-type refinements and then uses them to derive discrete, integral, operator, and numerical-radius bounds \cite{gavrea2007,sababheh2022}. The second line expands the ambient setting itself, replacing the ordinary Hilbert structure with a semi-Hilbert or related generalized framework to recover stronger or more flexible forms of the same inequalities \cite{altwaijry2023}. More recently, there has also been sustained interest in functional inequalities on Banach spaces and in orthogonality-based characterizations of inner product spaces, reflecting the fact that the geometry of a normed or inner product space continues to support new inequality mechanisms well beyond the classical literature \cite{niculescu2024,xia2024}.\\

The present paper follows this general philosophy, but introduces a different organizing principle. Rather than starting from a single direct estimate and attempting to optimize it by repeated algebraic manipulation, we introduce a \emph{local product} on a \emph{sheet}, together with the associated local product space, as a structured way to encode multivariate integral expressions. Conceptually, the local product is a hybrid object: it combines the geometric information carried by the ambient inner product with a multidimensional integral kernel whose shape is controlled by the chosen sheet. The sheet plays the role of a scalar weight that is evaluated on the inner product and then propagated through the iterated integral. In this way, one and the same template produces several distinct inequalities by varying the sheet, while preserving a common geometric backbone. The guiding idea is therefore not to merely estimate an integral, but to build an analytic device that packages the integral and the inner-product dependence into a single object.\\

This viewpoint is particularly effective because the local product admits several canonical specializations that immediately recover the basic building blocks used later in the paper. When the sheet is the constant function, the local product collapses to the volume of the underlying rectangular region, hence producing the factor
$$
\prod_{j=1}^n\bigl(|b_j|-|a_j|\bigr).
$$
When the sheet is the logarithm, the same construction yields a logarithmically weighted local product, and this naturally leads to lower and upper bounds in which $\log(\langle a,b\rangle)$ or $\log\log(\langle a,b\rangle)$ appears as the controlling scalar term. Likewise, the identity sheet and the inverse-identity sheet produce two complementary forms of the same structure, one leading to a direct comparison with the inner product and the other to a reciprocal-type estimate. In particular, the paper shows that a single local-product mechanism can generate the three principal inequalities stated in the introduction: a lower bound involving $\log\log(\langle a,b\rangle)$, an upper bound involving $\log(\langle a,b\rangle)$, and a third lower bound involving $1/\log(\langle a,b\rangle)$. The resulting inequalities are not isolated coincidences; they arise from the same multivariate template after a careful choice of sheet and exponent.\\

This is one reason why the local product framework is of independent interest. The literature cited above shows that many modern inequality results arise by refining ambient geometry, strengthening classical estimates, or by introducing auxiliary structures such as seminorms, modified inner products, and generalized orthogonality notions \cite{gavrea2007,sababheh2022,altwaijry2023,niculescu2024,xia2024}. The present paper contributes to this landscape in a different way: it introduces an analytic template in which the role of the auxiliary object is played by a sheet, and the resulting estimate is read off from the corresponding local product. This produces a compact and flexible mechanism for manufacturing inequalities that would be difficult to obtain by a direct calculation alone. In that sense, the local product should be viewed not merely as a notation, but as a structural device that organizes the dependence of the estimate on the inner product, on the vector norms, and on the iterated integration domain.\\

The main applications obtained in the paper can be summarized as follows. First, the local product is used to produce a family of inequalities for iterated integrals over coordinate intervals in $\mathbb{R}^n$, with the integrands chosen so that the resulting bounds naturally interact with the specializations of the local product described above. Second, the method yields both upper and lower estimates depending on the chosen sheet, showing that the same framework can control a nonlinear expression from both sides. Third, the results exhibit a repeated pattern in which the factors that involve $\langle a,b\rangle$, $\|a\|$, $\|b\|$, and $\prod_{j=1}^n(|b_j|-|a_j|)$ appear in a unified way, rather than as unrelated terms introduced by separate arguments. The local product therefore provides a common language for several inequalities that, at first sight, seem to have a different analytic character.

\subsection{Organization of the paper} The structure of the paper is as follows. Section~2 introduces the local product and the associated local product space and records the principal specializations obtained from the constant, logarithmic, identity, inverse-identity, and log--log sheets. Section~3 develops the elementary properties of the local product, including basic symmetry and comparison principles that are needed later. Section~4 contains the main applications and proves the inequalities stated in the introduction by selecting the appropriate sheets and applying the structural properties established earlier. A short concluding remark is included at the end of the paper to emphasize the conceptual role of the local product as a flexible device for inequality production.\\

The following inequalities are deduced in the following paper:

\begin{theorem}
Let $\vec{a}=(a_1,a_2,\ldots,a_n),\vec{b}=(b_1,b_2,\ldots,b_n)\in \mathbb{R}^n$ be such that $e^e<\langle \vec{a},\vec{b} \rangle$ and $b_j>a_j$ for $1\leq j\leq n$. We have
\begin{align}
\int \limits_{|a_n|}^{|b_n|} \int \limits_{|a_{n-1}|}^{|b_{n-1}|}\cdots \int \limits_{|a_1|}^{|b_1|}\bigg|\log \bigg(i\frac{\sqrt[4s]{\sum \limits_{j=1}^{n}x^{4s}_j}}{||\vec{a}||^{4s+1}+||\vec{b}||^{4s+1}}\bigg)\bigg| dx_1dx_2\cdots dx_n \geq \frac{\bigg|\prod_{j=1}^{n}|b_j|-|a_j|\bigg|}{\log \log (\langle a,b\rangle)}\nonumber
\end{align}
for all $s\in \mathbb{N}$, where $\langle,\rangle$ denotes the inner product and $i^2=-1$.
\end{theorem}
\bigskip

\begin{theorem}
Let $\vec{a}=(a_1,a_2,\ldots,a_n),\vec{b}=(b_1,b_2,\ldots,b_n)\in \mathbb{R}^n$ be such that $1<\langle \vec{a},\vec{b} \rangle$. We have
\begin{align}
\bigg |\int \limits_{|a_n|}^{|b_n|} \int \limits_{|a_{n-1}|}^{|b_{n-1}|}\cdots \int \limits_{|a_1|}^{|b_1|}\sqrt[4s]{\sum \limits_{i=1}^{n}x^{4s}_i}dx_1dx_2\cdots dx_n\bigg|&\leq \frac{|\langle \vec{a},\vec{b}\rangle|}{2\pi |\log(\langle \vec{a},\vec{b}\rangle)|}\times (||\vec{a}||^{4s+1}+||\vec{b}||^{4s+1})\nonumber \\&\times \bigg|\prod_{i=1}^{n}|b_i|-|a_i|\bigg|\nonumber
\end{align}
for all $s\in \mathbb{N}$, where $\langle,\rangle$ denotes the inner product.
\end{theorem}

\begin{theorem}
Let $\vec{a}=(a_1,a_2,\ldots,a_n),\vec{b}=(b_1,b_2,\ldots,b_n)\in \mathbb{R}^n$ with $a_j,b_j>0$ and $\langle,\rangle$ denote the inner product such that $1<\langle a,b \rangle \leq e$ with $\langle a,b \rangle \neq 1$. We have
\begin{align}
\bigg|\int \limits_{|a_n|}^{|b_n|} \int \limits_{|a_{n-1}|}^{|b_{n-1}|}\cdots \int \limits_{|a_1|}^{|b_1|}\frac{1}{\sqrt[4s+3]{\sum \limits_{j=1}^{n}x^{4s+3}_j}}dx_1dx_2\cdots dx_n\bigg| \geq \frac{2\pi \times |\log (\langle a,b \rangle)|\bigg|\prod_{j=1}^{n}|b_j|-|a_j|\bigg|}{||\vec{a}||^{4s+4}+||\vec{b}||^{4s+4}}\nonumber.\nonumber
\end{align}
\end{theorem}

\section{The local product and associated space}

In this section, we introduce and study the notion of the \emph{local product} and associated space.

\begin{definition}
Let $\vec{a},\vec{b}\in \mathbb{C}^n$ and $f:\mathbb{C}\longrightarrow \mathbb{C}$ be continuous on $\cup_{j=1}^{n}[|a_j|,|b_j|]$. Let $(\mathbb{C}^n,\langle,\rangle)$ be a complex inner product space. By the $k^{th}$ \emph{local product} of $\vec{a}$ with $\vec{b}$ on the \emph{sheet} $f$, we mean the bi-variate map 
$$
\mathcal{G}^k_f:(\mathbb{C}^n,\langle,\rangle) \times (\mathbb{C}^n,\langle,\rangle) \longrightarrow \mathbb{C}
$$ 
such that 
\begin{align}
\mathcal{G}^k_f(\vec{a};\vec{b})=f(\langle \vec{a},\vec{b}\rangle)\int \limits_{|a_n|}^{|b_n|} \int \limits_{|a_{n-1}|}^{|b_{n-1}|}\cdots \int \limits_{|a_1|}^{|b_1|}f\circ \mathbf{e}\bigg((i)^k \frac{\sqrt[k]{\sum \limits_{j=1}^{n}x^k_j}}{||\vec{a}||^{k+1}+||\vec{b}||^{k+1}}\bigg)dx_1dx_2\cdots dx_n\nonumber
\end{align}
where $\langle,\rangle$ denotes the inner product and where $\mathbf{e}(q)=e^{2\pi iq}$. We denote an inner product space with a $k^{th}$ \emph{local product} defined over a sheet $f$ as the $k^{th}$ local product space over a sheet $f$. We denote this space by the triple $(\mathbb{C}^n,\langle,\rangle,\mathcal{G}^k_f(;))$.
\end{definition}
\bigskip

In certain ways, the $k^{th}$ local product is a universal map induced by a sheet. In other words, a local product can be made by carefully selecting the sheet. We get the local product by making our sheet the constant function $f:=1$

\begin{align}
\mathcal{G}^{k}_1(\vec{a};\vec{b})&=\int \limits_{|a_n|}^{|b_n|} \int \limits_{|a_{n-1}|}^{|b_{n-1}|}\cdots \int \limits_{|a_1|}^{|b_1|}dx_1dx_2\cdots dx_n\nonumber \\&=\prod_{i=1}^{n}|b_i|-|a_i|.\nonumber
\end{align}
Similarly, if we take our sheet as $f=\log$, then under the condition that $\langle\vec{a},\vec{b}\rangle \neq 0$, we obtain the induced local product 
\begin{align}
\mathcal{G}^k_{\log}(\vec{a};\vec{b})&=2\pi \times (i)^{k+1}\frac{\log(\langle \vec{a},\vec{b}\rangle)}{||\vec{a}||^{k+1}+||\vec{b}||^{k+1}}\int \limits_{|a_n|}^{|b_n|} \int \limits_{|a_{n-1}|}^{|b_{n-1}|}\cdots \int \limits_{|a_1|}^{|b_1|}\sqrt[k]{\sum \limits_{j=1}^{n}x^k_j}dx_1dx_2\cdots dx_n.\nonumber
\end{align}
Taking the sheet $f=\mathrm{Id}$ to be the identity function, we obtain the associated local product 
\begin{align}
\mathcal{G}^k_{\mathrm{Id}}(\vec{a};\vec{b})=\langle \vec{a},\vec{b}\rangle \int \limits_{|a_n|}^{|b_n|} \int \limits_{|a_{n-1}|}^{|b_{n-1}|}\cdots \int \limits_{|a_1|}^{|b_1|}\mathbf{e}\bigg(\frac{(i)^k\sqrt[k]{\sum \limits_{j=1}^{n}x^k_j}}{||\vec{a}||^{k+1}+||\vec{b}||^{k+1}}\bigg)dx_1dx_2\cdots dx_n.\nonumber
\end{align}
Again, by taking the sheet $f=\mathrm{Id}^{-1}$ with $\langle a,b \rangle \neq 0$, we obtain the corresponding induced $k^{th}$ local product 
\begin{align}
\mathcal{G}^k_{\mathrm{Id}^{-1}}(\vec{a};\vec{b})=\frac{1}{\langle \vec{a},\vec{b}\rangle}\int \limits_{|a_n|}^{|b_n|} \int \limits_{|a_{n-1}|}^{|b_{n-1}|}\cdots \int \limits_{|a_1|}^{|b_1|}\mathbf{e}\bigg(-\frac{(i)^k\sqrt[k]{\sum \limits_{j=1}^{n}x^k_j}}{||\vec{a}||^{k+1}+||\vec{b}||^{k+1}}\bigg)dx_1dx_2\cdots dx_n. \nonumber
\end{align}
Also, by taking the sheet $f=\log \log$, we have the associated $k^{th}$ local product 
\begin{align}
\mathcal{G}^{k}_{\log \log}(\vec{a};\vec{b})&=\log \log (\langle \vec{a},\vec{b}\rangle)\int \limits_{|a_n|}^{|b_n|}\int \limits_{|a_{n-1}|}^{|b_{n-1}|}\cdots\int \limits_{|a_1|}^{|b_1|}\log \bigg(i\frac{\sqrt[k]{\sum \limits_{j=1}^{n}x^{k}_j}}{||\vec{a}||^{k+1}+||\vec{b}||^{k+1}}\bigg)dx_1dx_2\cdots dx_n.\nonumber
\end{align}

\section{Properties of the local product product}

In this section, we study some properties of the local product on a fixed sheet.

\begin{proposition}\label{key properties}
The following holds
\begin{enumerate}
\item [(i)] If $f$ is linear such that $\langle a,b\rangle=-\langle b,a\rangle$ then 
\begin{align}
\mathcal{G}^k_f(\vec{a};\vec{b})=(-1)^{n+1}\mathcal{G}^k_f(\vec{b};\vec{a}).\nonumber
\end{align}

\item [(ii)] Let $f,g:\mathbb{R}\longrightarrow \mathbb{R}^{+}$ be such that $f(t)\leq g(t)$ for any $t\in [1,\infty)$ with $f(<a,b>)<g(<a,b>)$. We have $|\mathcal{G}_f(\vec{a};\vec{b})|\leq |\mathcal{G}_g(\vec{a};\vec{b})|$.
\end{enumerate}
\end{proposition}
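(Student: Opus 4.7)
For part (i), my plan is to exploit the symmetry of the integrand under $\vec a\leftrightarrow\vec b$: the factor $f\circ\mathbf{e}\bigl((i)^k\sqrt[k]{\sum x_j^k}/(||\vec a||^{k+1}+||\vec b||^{k+1})\bigr)$ depends on the pair $(\vec a,\vec b)$ only through the symmetric denominator $||\vec a||^{k+1}+||\vec b||^{k+1}$, so as a function of $x_1,\ldots,x_n$ it is invariant under swapping $\vec a$ and $\vec b$. I would then write out $\mathcal{G}^k_f(\vec b;\vec a)$, reverse each of the $n$ iterated integration bounds (contributing a factor $(-1)^n$), and apply linearity of $f$ together with the hypothesis $\langle\vec b,\vec a\rangle=-\langle\vec a,\vec b\rangle$ to extract one more minus sign from the prefactor via $f(\langle\vec b,\vec a\rangle)=-f(\langle\vec a,\vec b\rangle)$. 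Combining these two sign contributions gives the asserted factor $(-1)^{n+1}$.

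For part (ii) my plan is a factor-by-factor domination of $|\mathcal{G}_f(\vec a;\vec b)|$ by $|\mathcal{G}_g(\vec a;\vec b)|$. Since $f,g:\mathbb{R}\to\mathbb{R}^{+}$, the prefactor comparison $|f(\langle\vec a,\vec b\rangle)|\leq |g(\langle\vec a,\vec b\rangle)|$ is immediate from the pointwise bound $f\leq g$. For the $n$-fold integral I would push the absolute value inside via the triangle inequality and then use the pointwise domination $|f\circ\mathbf{e}(\cdot)|\leq |g\circ\mathbf{e}(\cdot)|$ to bound the integrand, closing the argument by monotonicity of the iterated integral.

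The delicate point, and the main obstacle, lies in part (ii): strictly speaking $\mathbf{e}(q)=e^{2\pi iq}$ takes values on the unit circle, whereas the hypothesis $f\leq g$ is only posited on $[1,\infty)\subset\mathbb{R}$, so some care (or a reinterpretation of the domain on which $f\circ\mathbf{e}$ is evaluated) is needed to guarantee that the pointwise bound genuinely applies at every argument visited by the integrand. In the concrete settings of Theorems \ref{app2} and \ref{app3} the inner integrand is ultimately a non-negative real expression, in which case the triangle inequality becomes an identity and the comparison reduces to a clean monotonicity statement for multiple integrals.
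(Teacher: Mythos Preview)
Your approach matches the paper's. For (i) the paper does precisely what you outline --- reverse all $n$ pairs of integration limits for a factor $(-1)^n$ and then use linearity of $f$ together with $\langle a,b\rangle=-\langle b,a\rangle$ to extract one more sign from the prefactor --- and for (ii) the paper's entire proof is the single line ``Property (ii) follows very easily from the inequality $f(t)\leq g(t)$,'' so your factor-by-factor sketch (together with the domain caveat you correctly flag) already exceeds the detail given there.
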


\begin{proof}
\begin{enumerate}
\item [(i)] By the linearity of $f$, we can write
\begin{align}
\mathcal{G}^k_f(\vec{a};\vec{b})&=f(\langle \vec{a},\vec{b}\rangle)\int \limits_{|a_n|}^{|b_n|} \int \limits_{|a_{n-1}|}^{|b_{n-1}|}\cdots \int \limits_{|a_1|}^{|b_1|}f\circ \mathbf{e}\bigg((i)^k \frac{\sqrt[k]{\sum \limits_{j=1}^{n}x^k_j}}{||\vec{a}||^{k+1}+||\vec{b}||^{k+1}}\bigg)dx_1dx_2\cdots dx_n \nonumber \\&=f(\langle \vec{a},\vec{b}\rangle)\int \limits_{|a_n|}^{|b_n|} \int \limits_{|a_{n-1}|}^{|b_{n-1}|}\cdots \int \limits_{|a_1|}^{|b_1|}f\circ \mathbf{e}\bigg((i)^k \frac{\sqrt[k]{\sum \limits_{j=1}^{n}x^k_j}}{||\vec{a}||^{k+1}+||\vec{b}||^{k+1}}\bigg)dx_1dx_2\cdots dx_n \nonumber \\&=f(-\langle b,a \rangle)(-1)^n\int \limits_{|b_n|}^{|a_n|} \int \limits_{|b_{n-1}|}^{|a_{n-1}|}\cdots \int \limits_{|b_1|}^{|a_1|}f\circ \mathbf{e}\bigg((i)^k \frac{\sqrt[k]{\sum \limits_{j=1}^{n}x^k_j}}{||\vec{a}||^{k+1}+||\vec{b}||^{k+1}}\bigg)dx_1dx_2\cdots dx_n \nonumber \\&=(-1)^{n+1}f(\langle b,a \rangle)\int \limits_{|b_n|}^{|a_n|} \int \limits_{|b_{n-1}|}^{|a_{n-1}|}\cdots \int \limits_{|b_1|}^{|a_1|}f\circ \mathbf{e}\bigg((i)^k \frac{\sqrt[k]{\sum \limits_{j=1}^{n}x^k_j}}{||\vec{a}||^{k+1}+||\vec{b}||^{k+1}}\bigg)dx_1dx_2\cdots dx_n \nonumber \\&=(-1)^{n+1}\mathcal{G}^k_f(\vec{b};\vec{a}).\nonumber
\end{align}
\bigskip

\item [(ii)] Property (ii) follows very easily from the inequality $f(t)\leq g(t)$.
\end{enumerate} 
\end{proof}

\section{Applications of the local product}

In this section, we explore some applications of the local product. 

\begin{theorem}\label{app3}
Let $\vec{a}=(a_1,a_2,\ldots,a_n),\vec{b}=(b_1,b_2,\ldots,b_n)\in \mathbb{R}^n$ be such that $e^e<\langle \vec{a},\vec{b} \rangle$ and $b_j>a_j$ for $1\leq j\leq n$. We have
\begin{align}
\int \limits_{|a_n|}^{|b_n|} \int \limits_{|a_{n-1}|}^{|b_{n-1}|}\cdots \int \limits_{|a_1|}^{|b_1|}\bigg|\log \bigg(i\frac{\sqrt[4s]{\sum \limits_{j=1}^{n}x^{4s}_j}}{||\vec{a}||^{4s+1}+||\vec{b}||^{4s+1}}\bigg)\bigg| dx_1dx_2\cdots dx_n\geq \frac{\bigg|\prod_{j=1}^{n}|b_j|-|a_j|\bigg|}{\log \log (\langle a,b\rangle)}\nonumber
\end{align}
for all $s\in \mathbb{N}$, where $\langle,\rangle$ denotes the inner product and $i^2=-1$.
\end{theorem}

\begin{proof}
Let $f:\mathbb{R} \longrightarrow \mathbb{R}^{+}$ and $\vec{a},\vec{b}\in \mathbb{R}^n$ be such that $e^e< \langle\vec{a},\vec{b}\rangle$. We note that 
\begin{align}
\mathcal{G}^{4s}_{\log \log}(\vec{a};\vec{b})&=\log \log (\langle \vec{a},\vec{b}\rangle)\int \limits_{|a_n|}^{|b_n|} \int \limits_{|a_{n-1}|}^{|b_{n-1}|}\cdots \int \limits_{|a_1|}^{|b_1|}\log \bigg(i\frac{\sqrt[4s]{\sum \limits_{j=1}^{n}x^{4s}_j}}{||\vec{a}||^{4s+1}+||\vec{b}||^{4s+1}}\bigg)dx_1dx_2\cdots dx_n\nonumber
\end{align}
taking $k=4s$ for any $s\in \mathbb{N}$. Also, by taking the sheet $f:=1$ as a constant function, we obtain the associated local product
\begin{align}
\mathcal{G}^{4s}_{1}(\vec{a};\vec{b})&=\int \limits_{|a_n|}^{|b_n|} \int \limits_{|a_{n-1}|}^{|b_{n-1}|}\cdots \int \limits_{|a_1|}^{|b_1|}dx_1dx_2\cdots dx_n\nonumber \\&=\prod_{i=1}^{n}|b_i|-|a_i|.\nonumber
\end{align}
Since $|\log it|=|\log t+i\frac{\pi}{2}|\geq 1$ on $\mathbb{R}^{+}$, the inequality is a consequence by using Proposition \ref{key properties}.
\end{proof}

\begin{theorem}\label{app2}
Let $\vec{a}=(a_1,a_2,\ldots,a_n),\vec{b}=(b_1,b_2,\ldots,b_n)\in \mathbb{R}^n$ be such that $1<\langle \vec{a},\vec{b} \rangle$. We have
\begin{align}
\bigg |\int \limits_{|a_n|}^{|b_n|}\int \limits_{|a_{n-1}|}^{|b_{n-1}|}\cdots \int \limits_{|a_1|}^{|b_1|}\sqrt[4s]{\sum \limits_{i=1}^{n}x^{4s}_i}dx_1dx_2\cdots dx_n\bigg|&\leq \frac{|\langle \vec{a},\vec{b}\rangle|}{2\pi |\log(\langle \vec{a},\vec{b}\rangle)|}\times (||\vec{a}||^{4s+1}+||\vec{b}||^{4s+1})\nonumber \\&\times \bigg|\prod_{i=1}^{n}|b_i|-|a_i|\bigg|\nonumber
\end{align}
for all $s\in \mathbb{N}$, where $\langle,\rangle$ denotes the inner product.
\end{theorem}

\begin{proof}
Let $f:\mathbb{R} \longrightarrow \mathbb{R}^{+}$ and $\vec{a},\vec{b}\in \mathbb{R}^n$ be such that $1<\langle\vec{a},\vec{b}\rangle$. We note that 
\begin{align}
\mathcal{G}^{4s}_{\log}(\vec{a};\vec{b})&=2\pi \times (i)^{4s+1}\frac{\log(\langle \vec{a},\vec{b}\rangle)}{||\vec{a}||^{4s+1}+||\vec{b}||^{4s+1}}\int \limits_{|a_n|}^{|b_n|} \int \limits_{|a_{n-1}|}^{|b_{n-1}|}\cdots \int \limits_{|a_1|}^{|b_1|}\sqrt[4s]{\sum \limits_{j=1}^{n}x^{4s}_j}dx_1dx_2\cdots dx_n\nonumber
\end{align}
taking $k=4s$ for any $s\in \mathbb{N}$. Also, by taking the sheet $f:=|\cdot|$ as the distance function, we obtain in this setting the associated local product
\begin{align}
\mathcal{G}^{4s}_{|\cdot|}(\vec{a};\vec{b})&=|\langle \vec{a},\vec{b}\rangle|\int \limits_{|a_n|}^{|b_n|} \int \limits_{|a_{n-1}|}^{|b_{n-1}|}\cdots \int \limits_{|a_1|}^{|b_1|}dx_1dx_2\cdots dx_n\nonumber \\&=|\langle \vec{a},\vec{b}\rangle|\times \bigg|\prod_{i=1}^{n}|b_i|-|a_i|\bigg|.\nonumber
\end{align}
Since $\log<|\cdot|$ on $(1,\infty))$, the inequality is a consequence by using Proposition \ref{key properties}.
\end{proof}

\begin{theorem}\label{app1}
Let $\vec{a}=(a_1,a_2,\ldots,a_n),\vec{b}=(b_1,b_2,\ldots,b_n)\in \mathbb{R}^n$ with $a_j,b_j>0$ and $\langle,\rangle$ denote the inner product such that $0<\langle a,b \rangle \leq e$ with $\langle a,b \rangle \neq 1$. We have
\begin{align}
\bigg|\int \limits_{|a_n|}^{|b_n|} \int \limits_{|a_{n-1}|}^{|b_{n-1}|}\cdots \int \limits_{|a_1|}^{|b_1|}\frac{1}{\sqrt[4s+3]{\sum \limits_{j=1}^{n}x^{4s+3}_j}}dx_1dx_2\cdots dx_n\bigg| \geq \frac{2\pi \times |\log (\langle a,b \rangle)|\bigg|\prod_{j=1}^{n}|b_j|-|a_j|\bigg|}{||\vec{a}||^{4s+4}+||\vec{b}||^{4s+4}}\nonumber.\nonumber
\end{align}
\end{theorem}

\begin{proof}
Let $f:\mathbb{R} \longrightarrow \mathbb{R}^{+}$ and $\vec{a},\vec{b}\in \mathbb{R}^n$. We note that 
\begin{align}
\mathcal{G}^{k}_{\frac{1}{\log}}(\vec{a};\vec{b})&=\frac{1}{\log (\langle a,b \rangle)}\times (||\vec{a}||^{k+1}+||\vec{b}||^{k+1})\times \frac{1}{2i^{k+1}\pi}\nonumber \\& \times \int \limits_{|a_n|}^{|b_n|} \int \limits_{|a_{n-1}|}^{|b_{n-1}|}\cdots \int \limits_{|a_1|}^{|b_1|}\frac{1}{\sqrt[k]{\sum \limits_{j=1}^{n}x^k_j}}dx_1dx_2\cdots dx_n\nonumber
\end{align}
taking $k=4s+3$ for any $s\in \mathbb{N}$. Also, by taking the sheet $f:=1$ as a constant function, we obtain the associated local product
\begin{align}
\mathcal{G}^{4s}_{1}(\vec{a};\vec{b})&=\int \limits_{|a_n|}^{|b_n|} \int \limits_{|a_{n-1}|}^{|b_{n-1}|}\cdots \int \limits_{|a_1|}^{|b_1|}dx_1dx_2\cdots dx_n\nonumber \\&=\prod_{i=1}^{n}|b_i|-|a_i|.\nonumber
\end{align}
Since $\frac{1}{\log}\geq 1$ on $(1,e)$, the inequality is a consequence by using Proposition \ref{key properties} and the requirement $0<\langle a,b \rangle \leq e$ with $\langle a,b \rangle \neq 1$.
\end{proof}

\begin{remark}
The notion of a local product on a sheet has been carefully exploited in this paper to prove some new inequalities. In principle, these inequalities could be proved directly without using the notion of the local product on a sheet. However, a direct proof may be challenging and may possibly not be attainable. The notion of a local product is important by itself, as it allows us to examine the interaction of the behaviour of varied functions (sheets) freely chosen in relation to the product with appropriate supports.
\end{remark}
\rule{100pt}{1pt}

\footnote{
\par
.}%

\bibliographystyle{amsplain}

\end{document}